\numberwithin{equation}{section}
\newtheorem{theorem}{Theorem}[section]
\newtheorem{proposition}[theorem]{Proposition}
\newtheorem{definition}[theorem]{Definition}
\newtheorem{remark}[theorem]{Remark}
\def\a{\alpha}
\def\d{\delta}
\def\d{{\rm d}'}
\newcommand{\R}{\mathbb{R}}
\newcommand{\rn}{\mathbb{R}^{n}}
\newcommand{\cD}{\mathcal{D}}
\newcommand{\C}{\mathbb{C}}
\newcommand{\zp}{z^\prime}
\newcommand{\zpp}{z^{\prime\prime}}
\newcommand{\tp}{t^\prime}
\newcommand{\tpp}{t^{\prime\prime}}
\newcommand{\ozp}{\overline{z}^\prime}
\newcommand{\ozpp}{\overline{z}^{\prime\prime}}
\begin{document}

\title[]{Gevrey local solvability in locally integrable structures}\author{Francesco Malaspina and
Fabio Nicola}
\address{Dipartimento di Scienze Matematiche,
Politecnico di Torino, corso
Duca degli Abruzzi 24, 10129
Torino, Italy}
\address{Dipartimento di Scienze Matematiche,
Politecnico di Torino, corso
Duca degli Abruzzi 24, 10129
Torino, Italy}
\email{francesco.malaspina@polito.it}
\email{fabio.nicola@polito.it}
\thanks{}
\begin{abstract}
We consider a locally integrable real-analytic structure,
and we investigate the local solvability in the category of
Gevrey functions and ultradistributions of the complex $\d$
naturally induced by the de Rham complex. We prove that the
so-called condition $Y(q)$ on the signature of the Levi
form, for local solvability of $\d u=f$, is still necessary
even if we take $f$ in the classes of Gevrey functions and
look for solutions $u$ in the corresponding spaces of
ultradistributions.
\end{abstract}
\subjclass[2000]{35S30,
47G30, 42C15}
\keywords{Gevrey local
solvability, locally
integrable structures,
Poincar\'e lemma,
differential complexes, involutive structures}
\maketitle

\section{Introduction and statement of
the results}\label{intro}

Consider a real-analytic
manifold $M$ of dimension
$m+n$. A real-analyitc
locally integrable structure
on $M$, of rank $n$, is
defined by a real analytic
subbundle
$\mathcal{V}\subset\C TM$ of
rank $n$, satisfying the
Frobenius condition and such
that the subbundle
$T^\prime\subset\C T^\ast M$
orthogonal to $\mathcal{V}$
is locally spanned by exact
differentials. As usual we
will denote by
$T^0=T^\prime\cap T^\ast M$
the so-called {\it
characteristic set}. For any
 open subset
$\Omega\subset M$ and $s>1$
the space $G^s(\Omega,
\Lambda^{p,q})$ of
$(p,q)$-forms with Gevrey
coefficients of order $s$
 is then
defined (see Section 3 below
and Treves \cite{t2}) and the
de Rham differential induces
a map
\[
{\rm d}^\prime: G^s(\Omega,
\Lambda^{p,q})\to G^s(\Omega,
\Lambda^{p,q+1}).
\]
Similarly, the de Rham
differential induces a
complex on the space of
``ultra-currents"
$\cD'_s(\Omega,
\Lambda^{p,q})$, i.e. forms
with ultradistribution
coefficients:
\[
{\rm d}^\prime:
\mathcal{D}'_s(\Omega,
\Lambda^{p,q})\to
\cD'_s(\Omega,
\Lambda^{p,q+1}).\] When
$\mathcal{V}\cap\overline{\mathcal{V}}=0$
the structure is called $CR$
and ${\rm d}^\prime$ is the
so-called tangential
Cauchy-Riemann operator.\par
We are interested in
necessary conditions for the
Gevrey local solvability
problem for the complex ${\rm
d}'$ to hold near a given
point $x_0$.
\begin{definition}\label{risolubilita} We say
that the complex ${\rm d}'$
is locally solvable near
$x_0$ and in degree $q$, $1\leq q\leq n$, in
the sense of
ultradistribution of order
$s$, if for every sufficiently small open
neighborhood $\Omega$ of
$x_0$ and every
 cocycle $f\in
G^s(\Omega,\Lambda^{0,q})$ there
exists an open neighbourhood
$V\subset U$ of $x_0$ and a
ultradistribution section
$u\in
\mathcal{D}^\prime_s(V,\Lambda^{0,q-1})$
solving ${\rm d}^\prime u=f$
in $V$.
\end{definition}
The analogous problem in the
setting of smooth functions and Schwartz
distributions has been
extensively considered, see
e.g.
\cite{an2,an1,libro,cor0,cor01,cor1,cor3,cor2,mi,nac,nac2,nicola,peloso2,t2}, inspired by the results in \cite{ho0,lewy} for scalar operators of principal type; see also \cite{lerner,rodino2} as general references for the problem of local solvability of scalar linear partial differential operators.  \par
Several geometric invariants were there
introduced, e.g.\ the signature of the Levi form recalled below, which represent obstructions to
the solvability in the sense
of distributions, that is,
for some smooth $f\in
C^\infty(U,\Lambda^{0,q})$
there is no distribution
solution $u\in
\cD'(V,\Lambda^{0,q-1})$ to
${\rm d}'u=f$ in $V$, for
every neighbourhood $V\subset
U$ of $x_0$.
\par It is therefore natural
to wonder whether, under the same condition as in the smooth category, ${\rm d}'$
is still  non-solvable even if we
choose $f$ in the smaller
class of Gevrey functions
$G^s(U,\Lambda^{0,q})\subset
C^\infty(U,\Lambda^{0,q})$
and we look for solutions in the
larger class of ultradistributions
$\cD'_s(V,\Lambda^{0,q})\supset
\cD'(V,\Lambda^{0,q})$, as in
Definition
\ref{risolubilita}. In this
note we present a
result in this
direction.\par Let us note that general sufficient conditions for local solvability in the Gevrey category have been  recently  obtained in \cite{caetano2}; see also \cite{caetano,mi2}.\par
We recall
 that at any point $(x_0,\omega_0)\in T^0$ it is
  well defined a sesquilinear
  form $\mathcal{B}_{(x_0,\omega_0)}:\mathcal{V}_{x_0}
  \times\mathcal{V}_{x_0}\to\mathbb{C}$,
  ($\mathcal{V}_{x_0}$ is the fibre above $x_0$) by
\[
\mathcal{B}_{(x_0,\omega_0)}(\mathbf{v}_1,\mathbf{v}_2)=\left\langle\omega_0,(2\iota)^{-1}[V_1,\overline{V_2}]|_{x_0}\right\rangle,
\]
with $\mathbf{v}_1,\mathbf{v}_2\in\mathcal{V}_{x_0}$,
where $V_1$ and $V_2$ are smooth sections
 of $\mathcal{V}$ such that $V_1|_{x_0}=\mathbf{v}_1$,
  $V_2|_{x_0}=\mathbf{v}_2$.
  The associated quadratic form $\mathcal{V}_{x_0}
  \ni\mathbf{v}\mapsto \mathcal{B}_{(x_0,\omega_0)}
  (\mathbf{v},\mathbf{v})$, or
  $\mathcal{B}_{(x_0,\omega_0)}$ itself,
  is known as {\it Levi
  form}.\par Here is our
  result.
\begin{theorem}\label{teor2} Let $(x_0,\omega_0)\in T^0$, $\omega_0\not=0$.
 Suppose that $\mathcal{B}_{(x_0,\omega_0)}$ has
 exactly $q$ positive
 eigenvalues, $1\leq q\leq n$, and
 $n-q$ negative eigenvalues,
 and that its restriction to $\mathcal{V}_{x_0}\cap
 \overline{\mathcal{V}}_{x_0}$ is non-degenerate. \par
Then, for every $s>1$, ${\rm
d}'$ is not locally solvable
in the sense of
ultradistributions of order
$s$, near $x_0$ and in degree
$q$.
\end{theorem}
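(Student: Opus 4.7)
\medskip
\noindent\textbf{Proof plan.} The plan is to argue by contradiction via a functional-analytic reduction followed by an explicit Gevrey complex-geometric-optics construction. Assume that $\d$ is locally solvable in degree $q$ near $x_0$ in the sense of Definition~\ref{risolubilita}. A closed-graph / Baire-type argument, adapted to the inductive-limit topology of $G^s$ and the strong dual topology of $\cD'_s$, then yields an a priori inequality of the form
\[
|\langle f,\phi\rangle|\le C\,\|f\|_{G^s_h(U)}\,p_{s,K}({}^t\!\d \phi),
\]
valid for every $\d$-closed $f\in G^s(U,\Lambda^{0,q})$ and every compactly supported Gevrey test form $\phi$ of complementary bidegree with support in a fixed small neighbourhood of $x_0$; here $h$, $K$ and the Gevrey seminorm $p_{s,K}$ are fixed by the hypothetical solution map.

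Next, exploiting the non-degeneracy of $\mathcal{B}_{(x_0,\omega_0)}$ on $\mathcal{V}_{x_0}\cap\overline{\mathcal{V}}_{x_0}$ together with the signature hypothesis $(q,n-q)$, I would decompose $\mathcal V$ and pick real-analytic coordinates centred at $x_0$ together with holomorphic first integrals $Z_1,\dots,Z_m$ of $\mathcal V$ that diagonalize the Levi form on $\mathcal{V}_{x_0}$. This reduces the analysis to an explicit model in which cocycles and test forms can be written concretely in terms of the $Z_j$ and the complementary real coordinates. For each large parameter $\tau$ I would then construct a WKB-type pair $(f_\tau,\phi_\tau)$ of the form $a_\tau(x)\, e^{i\tau\Phi(x)}$, where $\Phi$ is a quadratic phase whose Hessian realizes the signature of $\mathcal B_{(x_0,\omega_0)}$ --- so that $e^{i\tau\Phi}$ concentrates near $x_0$ on both the cocycle and the test side --- and $a_\tau$ is a formal transport-series amplitude truncated at the optimal order $N(\tau)\sim \tau^{1/s}$. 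The pairing $\langle f_\tau,\phi_\tau\rangle$ is then evaluable by stationary phase and of polynomial size in $\tau$, while optimal truncation forces the Gevrey seminorms of $\d f_\tau$ and ${}^t\!\d\phi_\tau$ to decay like $\exp(-c\tau^{1/s})$; substituting into the a priori inequality and letting $\tau\to\infty$ yields the contradiction.

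The main technical obstacle is the sharp Gevrey control throughout the WKB construction: one must bound the iterated transport-equation amplitudes in Gevrey-$s$ norm, truncate the asymptotic expansion at the exact threshold dictated by~$s$, and verify that the residual error has the expected exponential smallness. Once those estimates are in place, the comparison of the two sides of the a priori inequality yields Theorem~\ref{teor2} by a direct rate-of-growth argument, in parallel with the classical scheme for the smooth/distribution category (cf.\ \cite{t2,nicola}) but with quantitative bounds sensitive to the Gevrey index.
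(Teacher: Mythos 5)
Your first step (Baire category plus open mapping to extract an a priori inequality from the solvability hypothesis) and your use of adapted real-analytic coordinates diagonalizing the Levi form both match the paper (Propositions \ref{prop2.1} and \ref{prr}). The gap is in the construction of the pair $(f_\tau,\phi_\tau)$. The a priori inequality you would obtain --- and the one the paper proves --- holds only for $f$ that are \emph{exact} cocycles, because the solvability hypothesis only produces solutions of ${\rm d}'u=f$ when ${\rm d}'f=0$. Your WKB amplitude truncated at order $N(\tau)\sim\tau^{1/s}$ gives an $f_\tau$ that is only approximately closed, with ${\rm d}'f_\tau=O(e^{-c\tau^{1/s}})$, so the inequality cannot be applied to it; correcting $f_\tau$ to an exact cocycle would require solving another ${\rm d}'$-equation, which is circular, and no such correction is offered.

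Even if one could insert the error term into the estimate, the rates do not close. By Proposition \ref{prorodino}, the Gevrey-$s$ seminorm $\|e^{i\tau\Phi}\|_{K,C}$ of a concentrating exponential with $\operatorname{Im}\Phi\ge 0$ necessarily grows like $e^{\tau^{1/s'}}$ with $s'<s$, so the right-hand side of the inequality behaves like $e^{\tau^{1/s'}}\cdot e^{-c\tau^{1/s}}$, which does \emph{not} tend to $0$ since $1/s'>1/s$ (and even at the borderline the competition is between uncontrollable constants fixed by the seminorm parameters). This is exactly the difficulty the paper's construction is designed to avoid: $f_{\rho,\lambda}=e^{\rho h_{1,\lambda}}d\overline{z}'\wedge dt'$ is \emph{exactly} closed because it is built from the first integrals $z_j,w_k$ of Proposition \ref{prr} (so ${\rm d}'z_j={\rm d}'w_k=0$ and the remaining differentials are absorbed by the wedge), and the only non-closed object is $v_{\rho,\lambda}$, whose cutoff $\chi$ has ${\rm d}'\chi$ supported where $\operatorname{Re}h_{2,\lambda}\le -c<0$; this yields genuine decay $e^{-c\rho}$ in \eqref{ab}, which dominates every Gevrey loss $e^{\rho^{1/s'}}$ from \eqref{aa}, while $\int f_{\rho,\lambda}\wedge v_{\rho,\lambda}\to c'\neq 0$. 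To repair your argument you should abandon the truncated transport series and instead take exact exponential solutions attached to the first integrals, reserving the smallness for ${\rm d}'$ of the cut-off test form, as in \eqref{h11}--\eqref{ag3}.
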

This result therefore strengthens the analogous one in the category of smooth functions and Schwartz distributions, which was proved in \cite{an2} for $CR$ manifolds and in \cite[Theorem XVIII.3.1]{t2} for general locally integrable structure; see also \cite{hnp,nicola2} for partial results when the Levi form is degenerate. As general reference for related results about scalar operators on Gevrey spaces see \cite{rodino}.\par


\section{Preliminaries}\label{sec2}
\subsection{Gevrey functions and ultradistributions}
Let us briefly recall the definition of the classes of Gevrey functions and corresponding ultradistributions; see e.g. \cite[Chapter 1]{rodino} for details.\par
 Let $s>1$ be a real number and $\Omega$ be an open subset of $\rn$; let $C$ be a positive constant. We denote by $G^s(\Omega,C)$ the space of smooth functions $f$ in $\rn$ such that for every compact $K\subset\Omega$,
  \[
 ||f||_{K,C}:=\sup_{\alpha} C^{-|\alpha|}(\alpha!)^{-s}\sup_{x\in K}|\partial^\alpha f(x)|<\infty.
 \]
 This is a Fr\'echet space endowed with the above seminorms.
 We set
 $G^s(\Omega)$ for the usual Gevrey space of order $s$, i.e. $f\in G^s(\Omega)$ if $f$ is smooth in $\Omega$ and for every compact $K\subset\Omega$ there exists $C>0$ such tha $ ||f||_{K,C}<\infty$. We will also consider the space $G^s_0(K,C)$ of functions in $G^s(\Omega,C)$ supported in the compact $K$; it is a Banach space with the norm  $||u||_{K,C}$. Finally we set \[
 G^s_0(\Omega)=\bigcup\limits_{K\subset\Omega,\ C>0} G^s_0(K,C).\]  \par
 The space of $\mathcal{D}'_s(\Omega)$ of ultradistributions of order $s$ in $\Omega$ is by definition the dual of  $G_0^s(\Omega)$, i.e. an element $u\in \mathcal{D}'_s(\Omega)$ is a linear functional on $G_0^s(\Omega)$ such that for every compact $K\subset\Omega$ and every constant $C>0$ there exists a constant $C'>0$ such that
 \[
 |\langle u,f\rangle|\leq C' \|f\|_{K,C},
 \]
 namely $u\in (G^s_0(K,C))'$ for every $K$, $C$.
 Clearly, $\mathcal{D}'_s(\Omega)$ contains the usual space $\mathcal{D}'(\Omega)$  of Schwartz distributions. \par
 We will need the following estimate for Gevrey seminorms of exponential functions.
 \begin{proposition}\label{prorodino}
 Let $\psi$ be a real-analytic function in a neighborhood $\Omega$ of $0$ in $\mathbb{R}^n$; then for every compact subset $K$ of $\Omega$ and every $C>0$, $s>s'>1$, there exists a constant $C'>0$ such that
 \begin{equation}\label{prorodino1}
 ||{\rm exp}(\iota\rho\psi)||_{K,C}\leq C'{\rm
 exp}(a\rho+\rho^{1/s'})
 \end{equation}
 for every $\rho>0$, where $a=\sup\{-{\rm Im}\,\psi(x):\,x\in K\}$.
 \end{proposition}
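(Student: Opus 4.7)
The plan is to derive the estimate via Cauchy estimates applied to a holomorphic extension of $\psi$, with the polydisk radius chosen to depend on $\rho$ in an optimal way.

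First I would use real-analyticity to extend $\psi$ to a holomorphic function on a complex neighborhood of $K$: there exists $r_0>0$ such that for every $x\in K$ the polydisk $D(x,r_0)\subset\C^n$ lies in the domain of holomorphy of $\psi$ and $|\psi(z)|$ is uniformly bounded on $\bigcup_{x\in K} D(x,r_0)$. Moreover, for $z\in D(x,r)$ with $r\leq r_0$, the bound $|{\rm Im}\,\psi(z)-{\rm Im}\,\psi(x)|\leq L r$ holds for some $L$ independent of $x\in K$. Hence for all $\rho>0$,
\[
\sup_{z\in D(x,r)}|\exp(\iota\rho\psi(z))|\leq \exp(a\rho + Lr\rho),
\]
and the multivariate Cauchy integral formula gives
\[
|\partial^\alpha\exp(\iota\rho\psi)(x)|\leq \alpha!\,r^{-|\alpha|}\exp(a\rho+Lr\rho).
\]

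The core step is the choice $r=\rho^{-\theta}$ with $\theta$ fixed in the open interval $((s'-1)/s',(s-1)/s')$, which is non-empty precisely because $s>s'$. Then $Lr\rho=L\rho^{1-\theta}$ has exponent $1-\theta<1/s'$, and the Gevrey factor becomes
\[
C^{-|\alpha|}(\alpha!)^{-s}|\partial^\alpha\exp(\iota\rho\psi)(x)|\leq (\alpha!)^{1-s}(C^{-1}\rho^\theta)^{|\alpha|}\exp(a\rho+L\rho^{1-\theta}).
\]
I would then bound the supremum over $\alpha$ of $(\alpha!)^{1-s}(C^{-1}\rho^\theta)^{|\alpha|}$ by applying $\alpha!\geq |\alpha|!/n^{|\alpha|}$ and Stirling, which reduces the problem to maximizing $(B e^{s-1}/N^{s-1})^N$ in $N=|\alpha|$ with $B=n^{s-1}C^{-1}\rho^\theta$. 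The optimum is attained at $N\sim\rho^{\theta/(s-1)}$ and gives a bound of the form $\exp(c\rho^{\theta/(s-1)})$ with $c$ depending only on $n,s,C$. Since $\theta/(s-1)<1/s'$, combining this with the earlier factor produces the desired majorant $C'\exp(a\rho+\rho^{1/s'})$ after possibly enlarging $C'$ to absorb the subpower contributions.

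The main obstacle, and the exact place where the hypothesis $s>s'$ is used, is balancing the two competing conditions $1-\theta<1/s'$ and $\theta/(s-1)<1/s'$: the Cauchy factor $r^{-|\alpha|}$ pushes $\theta$ down, while the exponential growth on the enlarged polydisk pushes $\theta$ up, and a compatible $\theta$ exists if and only if $(s'-1)/s'<(s-1)/s'$, i.e.\ $s'<s$. Once $\theta$ is fixed in this window, everything is routine: the proof is uniform in $\rho\geq 1$, and for $\rho$ in a bounded set the estimate is trivial by continuity, giving the claim.
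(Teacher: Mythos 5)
Your proof is correct, but it follows a genuinely different route from the paper. The paper stays on the real domain: it applies the Fa\`a di Bruno formula to $\partial^\alpha e^{\iota\rho\psi}$, uses the analytic derivative bounds $|\partial^\gamma\psi|\leq C_1^{|\gamma|}\gamma!$ on $K$ to control the combinatorial sums, and then absorbs the factor $(\alpha!)^{1-s}$ via $(|\alpha|!)^{1-s'}(j!)^{s'-1}\leq 1$ and Stirling, ending with the elementary bound $\sum_{j\leq|\alpha|}\rho^j/(j!)^{s'}\leq e^{s'\rho^{1/s'}}$ (the hypothesis $s>s'$ enters there, in absorbing $(|\alpha|!)^{s'-s}(C_2C_3/C)^{|\alpha|}$ into a constant). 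You instead exploit the holomorphic extension of $\psi$ near $K$ and run Cauchy estimates on polydisks of $\rho$-dependent radius $r=\rho^{-\theta}$, trading the loss $\alpha!\,r^{-|\alpha|}$ against the growth $e^{Lr\rho}$ on the enlarged polydisk; the hypothesis $s>s'$ appears as nonemptiness of the window $\theta\in\big((s'-1)/s',(s-1)/s'\big)$, and your optimization of $(\alpha!)^{1-s}(C^{-1}\rho^\theta)^{|\alpha|}$ in $|\alpha|$ is carried out correctly, yielding $\exp\big(c\rho^{\theta/(s-1)}\big)$ with $\theta/(s-1)<1/s'$. Your method avoids all the Fa\`a di Bruno combinatorics at the price of invoking the complex extension (available here since $\psi$ is real-analytic, which is exactly the information the paper encodes in the factorial derivative bounds); the paper's computation, being purely real, would adapt more directly to $\psi$ in a Gevrey class rather than analytic. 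One small point worth writing out: for the bounded range of $\rho$ the estimate is not literally ``by continuity'' of the seminorm, but it follows at once from your own Cauchy estimate with the fixed radius $r_0$, since $\sup_\alpha(\alpha!)^{1-s}(Cr_0)^{-|\alpha|}<\infty$ because $s>1$; with that remark included, the argument is complete.
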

 \begin{proof}
 By the Fa\`a di Bruno formula (see e.g. \cite[page 16]{GP}) we have, for $|\alpha|\geq 1$,
\begin{align*}
\partial^\alpha e^{\iota\rho\psi(x)}=\sum_{j=1}^{|\alpha|} \frac{{\rm exp}(\iota\rho\psi)}{j!}\sum_{\gamma_1+\ldots+\gamma_j=\alpha\atop |\gamma_k|\geq1}\frac{\alpha!}{\gamma_1!\ldots\gamma_j!}|\partial^{\gamma_1}(\iota\rho \psi(x))|\ldots |\partial^{\gamma_j} (\iota\rho\psi(x))|.
\end{align*}
By assumption there exists a constant $C_1>0$ such that $|\partial^\gamma \psi(x)|\leq C_1^{|\gamma|}\gamma!$ for $x\in K$, $|\gamma|\geq 1$. Hence for every $\alpha$,
\[
\sup_{x\in K}|\partial^\alpha e^{\iota\rho\psi(x)}|\leq e^{a\rho}\alpha!C_2^{|\alpha|}\sum_{j=0}^{|\alpha|}\frac{\rho^j}{j!}
\]
with $C_2=2^{n+1}C_1$, where we used
$$\sum_{\gamma_1+\ldots+\gamma_j=\alpha\atop |\gamma_k|\geq1} 1\leq \prod_{k=1}^n{\alpha_k+j-1 \choose j-1} \leq 2^{|\a|+n(j-1)}\leq 2^{(n+1)|\a|}.$$
Hence we have
\begin{align*}
||{\rm exp}(\iota\rho\psi)||_{K,C}
&\leq e^{a\rho}(\alpha!)^{1-s}(C_2/C)^{|\alpha|}\sum_{j=0}^{|\alpha|}\frac{\rho^j}{j!}\\
 &\leq e^{a\rho}(|\alpha|!)^{1-s}(C_2C_3/C)^{|\alpha|}\sum_{j=0}^{|\alpha|}\frac{\rho^j}{j!},
 \end{align*}
because $|\alpha|!\leq n^{|\alpha|}\alpha!$. Now, we have
$(|\alpha|!)^{1-s'}(j!)^{s'-1}\leq 1$ and by Stirling formula $(|\alpha|!)^{s'-s}(C_2C_3/C)^{|\alpha|}\leq C'$, so that
\[
||{\rm exp}(\iota\rho\psi)||_{K,C}\leq C'e^{a\rho}\sum_{j=0}^{|\alpha|}\frac{\rho^j}{(j!)^{s'}}\leq C'e^{a\rho}\sum_{j=0}^{|\alpha|}\Big(\frac{\rho^{j/s'}}{j!}\Big)^{s'}\leq C' e^{a\rho+s'\rho^{1/s'}}.
\]
 Since this holds for every $1<s'<s$, we can replace the constant $s'$ in front of $\rho^{1/s'}$ by $1$, possibly for a new constant $C'$ and for a slightly lower value of $s'$. Hence \eqref{prorodino1} is proved.
  \end{proof}
  \subsection{Locally integrable
structures} Consider a real-analytic
manifold $M$ of dimension
$N=m+n$. A real-analyitc
locally integrable structure
on $M$, of rank $n$, is
defined by a real analytic
subbundle
$\mathcal{V}\subset\C TM$ of
rank $n$, satisfying the
Frobenius condition and such
that subbundle
$T^\prime\subset\C T^\ast M$
orthogonal to $\mathcal{V}$
is locally spanned by exact
differentials. As usual we
will denote by
$T^0=T^\prime\cap T^\ast M$
the so-called {\it
characteristic set}. Let $k$ be a positive integer, we denote by $\Lambda^k\C T^\ast M$ the $k$-th exterior power of $\C T^\ast M$.
Let us consider complex exterior algebra
$$\Lambda \C T^\ast M=\oplus_{k=0}^N \Lambda^k\C T^\ast M,$$
for any pair of positive integers $p,q$ we denote by $$T^{\prime p,q}$$ the homogeneous of degree $p+q$ in the ideal generated by the $p$-th exterior power of $T^\prime$, $\Lambda^p T^\prime$. We have the inclusion $$T^{\prime p+1,q-1}\subset T^{\prime p,q}$$ which allows us to define
$$\Lambda^{p,q}=T^{\prime p,q}/T^{\prime p+1,q-1}.$$
If $\phi$ is a smooth section of $T^\prime $ over an open subset $\Omega\subset M$, its exterior derivative $d\phi$ is section of $T^{\prime 1,1}$. In other words $$d T^\prime\subset T^{\prime 1,1}.$$
It follows at once from this that, if $\sigma$ is a smooth section of $T^{\prime p,q}$ over $\Omega$, then $d\sigma$ is a section of $T^{\prime p, q+1}$ i.e.
$$d T^{\prime p,q}\subset T^{\prime p,q+1}.$$

Let $s>1$,
the space $G^s(\Omega,
\Lambda^{p,q})$ of
$(p,q)$-forms with Gevrey
coefficients of order $s$
 is
defined, as well as $G^s(\Omega,C;
\Lambda^{p,q})$, $G_0^s(K,C;
\Lambda^{p,q})$, etc, with notation analogous to the scalar case.  \par The
de Rham differential induces then
a map
\[
{\rm d}^\prime: G^s(\Omega,
\Lambda^{p,q})\to G^s(\Omega,
\Lambda^{p,q+1}).
\]
(see  Treves \cite[Section I.6]{t2} for more details).
Similarly, the de Rham
differential induces a
complex on the space of
``ultra-currents"
$\cD'_s(\Omega,
\Lambda^{p,q})$, i.e. forms
with ultradistribution
coefficients:
\[
{\rm d}^\prime:
\mathcal{D}'_s(\Omega,
\Lambda^{p,q})\to
\cD'_s(\Omega,
\Lambda^{p,q+1}).\]
Namely, consider for simplicity the case when $\Omega$ is orientable (in fact, in the sequel we will work in a local chart). Stokes' theorem implies that
\[
\int_\Omega \d u\wedge v=(-1)^{p+q-1}\int_\Omega u\wedge \d v
\]
if $u\in G^s(\Omega,\Lambda^{p,q})$, $v\in G_0^s(\Omega,\Lambda^{m-p,n-q-1})$, and accordingly we can define
\[
\langle \d u, v\rangle=(-1)^{p+q-1}\langle u, \d v\rangle
\]
if $u\in D^\prime_s(\Omega,\Lambda^{p,q})$, $v\in G_0^s(\Omega,\Lambda^{m-p,n-q-1})$.

\section{Local solvability estimates}
We now show that local solvability implies an a priori-estimate. This is analogous to the estimates of H{\"o}rmander \cite{ho0}, Andreotti, Hill and Nacinovich
\cite{an2}, Treves \cite[Lemma VIII.1.1]{t2}, in the framework of Schwartz distributions. \begin{proposition}\label{prop2.1} Suppose that, for
some $s>1$, the complex ${\rm
d}'$ is locally solvable near
$x_0$ and in degree $q$, in
the sense of
ultradistributions of order
$s$ (see Definition
\ref{risolubilita}). Then for every sufficiently small open neighborhood $\Omega$ of $x_0$,
every $C_1>0$, $0<\epsilon<C_2$ there exist a
compact $K\subset\Omega$, an
open neighbourhood
$\Omega'\subset\subset\Omega$
of $x_0$ and a constant
$C'>0$, such that
\begin{equation}\label{apriori}
\big|\int_\Omega f\wedge v\big|\leq
C'\|f\|_{K,C_1}\|{\rm
d}'v\|_{\overline{\Omega'},C_2},
\end{equation}
for every cocycle $f\in
G^s(\Omega,C_1;\Lambda^{0,q})$
and every $v\in
G^s_0(\overline{\Omega'},C_2-\epsilon;\Lambda^{m,n-q}).$
\end{proposition}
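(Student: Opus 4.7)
The argument is the Gevrey/ultradistribution counterpart of the classical derivation of a priori estimates from local solvability (cf.\ H\"ormander \cite{ho0}, Treves \cite[Lemma VIII.1.1]{t2}). The two key tools are a Baire category argument on a Fr\'echet space of cocycles, and the Stokes-type duality $\langle \d u,v\rangle = \pm\langle u,\d v\rangle$ recorded at the end of Section 2, which transfers a bound on the solution $u$ into a bound on the test form $v$.

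Fix a sufficiently small open $\Omega$ and $C_1>0$. The subspace $Z\subset G^s(\Omega,C_1;\Lambda^{0,q})$ of $\d$-closed forms, equipped with the seminorms $\|\cdot\|_{K,C_1}$ indexed by a compact exhaustion of $\Omega$, is a Fr\'echet (hence Baire) space. Choose a decreasing neighborhood basis $V_n\Subset V'_n\Subset\Omega$ of $x_0$ and define, for positive integers $n,N$,
\[
E_{n,N} = \big\{f\in Z : \exists\, u\in\cD'_s(V'_n,\Lambda^{0,q-1}),\ \d u=f \text{ on } V'_n \text{ and } |\langle u,w\rangle|\leq N\|w\|_{\overline{V_n},C_2}\ \forall w\in G_0^s(\overline{V_n},C_2)\big\}.
\]
The local solvability hypothesis and the very definition of an ultradistribution give $Z=\bigcup_{n,N}E_{n,N}$. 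Each $E_{n,N}$ is closed: for fixed $n$ and fixed Gevrey constant $C_2$, the space $G_0^s(\overline{V_n},C_2;\Lambda^{m,n-q+1})$ is a Banach space, so by Banach--Alaoglu the ball of radius $N$ in its dual is weak-$\ast$ sequentially compact; if $f_k\to f$ in $Z$ with corresponding $u_k\in E_{n,N}$, a weak-$\ast$ limit $u$ of a subsequence still satisfies the $N$-bound, and the identity $\int f_k\wedge v = \pm\langle u_k,\d v\rangle$ passes to the limit to yield $\d u=f$ on $V'_n$.

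By Baire's theorem, some $E_{n_0,N_0}$ has nonempty interior in $Z$; a standard translation-and-scaling argument (using linearity of $\d$) then promotes this into a \emph{uniform} statement: setting $\Omega':=V_{n_0}$, there exist a compact $K\subset\Omega$ and a constant $C'>0$ such that every cocycle $f\in Z$ admits a solution $u\in\cD'_s(V'_{n_0},\Lambda^{0,q-1})$ with $\d u = f$ on $V'_{n_0}$ and
\[
|\langle u,w\rangle|\leq C'\|f\|_{K,C_1}\|w\|_{\overline{\Omega'},C_2}, \qquad w\in G_0^s(\overline{\Omega'},C_2;\Lambda^{m,n-q+1}).
\]
For $v\in G_0^s(\overline{\Omega'},C_2-\epsilon;\Lambda^{m,n-q})$ the form $w:=\d v$ lies in $G_0^s(\overline{\Omega'},C_2;\Lambda^{m,n-q+1})$: the extra derivative and the real-analytic (hence Gevrey-$1$) coefficients of $\d$ inflate the Gevrey constant only by a fixed multiplicative factor, absorbed by the cushion $\epsilon$. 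The duality formula then gives $\int_\Omega f\wedge v=\pm\langle u,\d v\rangle$, and combining with the previous bound produces exactly \eqref{apriori}.

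The main obstacle is the closedness of the sets $E_{n,N}$: it rests on the functional-analytic observation that fixing simultaneously the compact $\overline{V_n}$ and the Gevrey constant $C_2$ turns the test space into a genuine Banach space, making weak-$\ast$ compactness of balls in its dual available. Once this setup is in place, the remaining points---the compatibility of $\d$ with the $\epsilon$-loss between Gevrey constants, and the passage from a Baire-open set to a uniform bound---are routine.
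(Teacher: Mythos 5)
Your argument is the same one the paper uses: cover the Fr\'echet space of cocycles by countably many sets indexed by a neighborhood of $x_0$ and a bound on the solution, apply Baire, and then transfer the resulting uniform bound to the test form $v$ through the transposition identity $\langle \d u,v\rangle=\pm\langle u,\d v\rangle$, using that $\d$ maps $G_0^s(\cdot,C_2-\epsilon)$ into $G_0^s(\cdot,C_2)$. The paper packages the Baire step as an open mapping theorem for the projection $(f,u)\mapsto f$ from the closed graph-type space $F_j\subset G^s(\Omega,C_1;\Lambda^{0,q})\times G_0^s(\overline{V_j},C_2;\Lambda^{0,q-1})'$, while you use closed sets $E_{n,N}$ plus translation and scaling; these are interchangeable.

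There is, however, one step that does not close as you wrote it: the closedness of $E_{n,N}$. Your definition requires the solution to be a genuine ultradistribution $u\in\cD'_s(V'_n,\Lambda^{0,q-1})$ with $\d u=f$ on $V'_n$, but the weak-$\ast$ limit you extract lives only in the dual of the Banach space $G_0^s(\overline{V_n},C_2;\Lambda^{m,n-q+1})$: such a functional acts on Gevrey test forms with constant at most $C_2$ and need not extend to $G_0^s(V'_n)$ (where all constants $C>C_2$ must be allowed), so it need not be an element of $\cD'_s(V'_n)$, and the equation $\d u=f$ survives the limit only when tested against forms in $G_0^s(\overline{V_n},C_2-\epsilon;\Lambda^{m,n-q})$. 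Hence the set you defined is not shown to be closed; what your compactness argument actually proves closed is the larger set in which $u$ is merely an element of $G_0^s(\overline{V_n},C_2)'$ and $\d u=f$ is interpreted by transposition against the $C_2-\epsilon$ test space. This is exactly how the paper sets things up (its condition ``$\d u=f$ in $G_0^s(\overline{V_j},C_2-\epsilon;\Lambda^{0,q})'$''), and with that modification your covering $Z=\bigcup E_{n,N}$ still holds (an honest ultradistribution solution restricts to such a functional by the definition of $\cD'_s$), and the rest of your argument, including the final estimate, goes through unchanged. A second, smaller point: $G_0^s(\overline{V_n},C_2)$ has no reason to be separable, so you should invoke weak-$\ast$ compactness of the dual ball via nets (general Banach--Alaoglu) rather than sequential compactness; alternatively, the paper's route through the open mapping theorem avoids any compactness argument altogether.
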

It will follow from the proof that $\|{\rm
d}'v\|_{\overline{\Omega'},C_2}<\infty$ if $v\in
G^s_0(\overline{\Omega'},C_2-\epsilon;\Lambda^{m,n-q}).$
\begin{proof}
Let $V_{j+1}\subset V_j\subset \subset\Omega$, $j=1,2,\ldots$, be a fundamental system of neighborhoods of $x_0$. Fix $C_1>0$, $0<\epsilon<C_2$ and consider the space
\begin{multline*}
F_j=\{(f,u)\in G^s(\Omega,C_1;\Lambda^{0,q})\times  G^s_0(\overline{V_j},C_2;\Lambda^{0,q-1})':\\   \d f=0\ {\rm in}\ \Omega,\ \d u=f \ {\rm in}\  G^s_0(\overline{V_j},C_2-\epsilon;\Lambda^{0,q})'\}.
 \end{multline*}
 The last condition means $\langle \d u,v\rangle=\langle f,v \rangle$ for every $v\in G^s_0(\overline{V_j},C_2-\epsilon;\Lambda^{m,n-q})$, which makes sense by transposition, because differentiation maps $G^s_0(\overline{V_j},C_2-\epsilon)\to G^s_0(\overline{V_j},C_2)$ (see e.g. \cite[Proposition 2.4.8]{rodino}) and multiplication by analytic functions preserves the latter space.\par
 Now, by direct inspection one sees that $F_j$ is a closed subspace of \\ $G^s(\Omega,C_1;\Lambda^{0,q-1})\times  G^s_0(\overline{V_j},C_2;\Lambda^{0,q})'$, therefore Fr\'echet. \par
 Let \[
 \pi_{j}:F_j\to \{f\in G^s(\Omega,C_1;\Lambda^{0,q-1}): \d f=0\}
 \]
  be the canonical projection $(f,u)\mapsto f$. The assumption of local solvability implies that
 \[
 \{f\in G^s(\Omega,C_1;\Lambda^{0,q-1}): \d f=0\}=\cup_{j} \pi_j(F_j).
 \]
 By the Baire theorem, there exists $j_0$ such that $\pi_{j_0}(F_{j_0})$ is of second category. By the open mapping theorem, we see that $\pi_{j_0}$ is onto and open: there exists a compact $K\subset\Omega$ and a constant $C'>0$ such that for every cocycle $f\in G^s(\Omega,C_1;\Lambda^{0,q})$, there exists $u\in G_0^s(\overline{V_{j_0}},C_2;\Lambda^{0,q-1})'$ satisfying $\d u=f$ in $G_0^s(\overline{V_{j_0}},C_2-\epsilon;\Lambda^{0,q})'$ and
 \[
 |u|_{{\overline{V_{j_0}},C_2}}:=\sup\limits_{||v||_{\overline{V_{j_0}},C_2}=1}|\langle u,v\rangle|\leq C' ||f||_{K,C_1}.
 \]
 Consider now the bilinear functional $(f,v)\mapsto \int_\Omega f\wedge v=\langle f,v\rangle$, for $f\in G^s(\Omega,C_1;\Lambda^{0,q})$ cocycle, and $v\in G_0^s(\overline{V_{j_0}},C_2-\epsilon;\Lambda^{m,n-q})$. Given such a  $f$, we take $u$ as before, and we get
 \[
 |\langle f,v\rangle|=|\langle \d u,v\rangle|=|\langle u,\d v\rangle|\leq  |u|_{{\overline{V_{j_0}},C_2}}\|\d v\|_{{\overline{V_{j_0}},C_2}}\leq C' \|f\|_{K,C_1}\|\d v||_{{\overline{V_{j_0}},C_2}}.
 \]
  \end{proof}
\section{Proof of
Theorem \ref{teor2}}
We work in a sufficiently small neighborhood $\Omega$ of
the point $x_0$ (to be chosen later), where local solvability holds.  We also take $x_0$ as the origin of the
coordinates, i.e.\ $x_0=0$. Moreover we make use of the special
coordinates, whose existence is proved in see section I.9 of \cite{t2}.
Namely, let $n={\rm dim}_\C\mathcal{V}_0$, $d={\rm dim}_\R T^0_0$,
 $\nu=n-{\rm dim}_\C(\mathcal{V}_0\cap\overline{\mathcal{V}}_0)$. We have the following result.
\begin{proposition}\label{prr}
Let $(0,\omega_0)\in T^0$, $\omega_0\not=0$,
 and suppose that the restriction of the Levi form $\mathcal{B}_{(0,\omega_0)}$ to $\mathcal{V}_0\cap\overline{\mathcal{V}}_0$ is non-degenerate. There exist real-analytic coordinates $x_j,y_j$,$s_k$ and $t_l$, $j=1,\ldots,\nu$, $k=1,\ldots, d$, $l=1,\ldots,n-\nu$, and smooth real valued and real-analitic functions $\phi_k(x,y,s,t)$, $k=1,\ldots d$, in a neighborhood $\mathcal{O}$ of $0$, satisfying
\begin{equation}\label{zero}
\phi_k|_0=0\ \ {\rm and}\ \  d\phi_k|_0=0,
\end{equation} such that
\[
\begin{cases}
z_j:=x_j+\iota y_j,\ j=1,\ldots,\nu,\\ w_k:=s_k+\iota\phi_k(x,y,s,t),\ k=1,\ldots,d,
\end{cases}
\]
define a system of first integrals for $\mathcal{V}$, i.e.\ their differential span $T^\prime|_\mathcal{O}$.\par Moreover, with respect to the basis \[
\left\{\left.\frac{\partial}{\partial
\overline{z}_j}\right|_0,\left.\frac{\partial}{\partial t_l}\right|_0; j=1,\ldots,\nu,\ l=1,\ldots,n-\nu\right\}\]
 of
$\mathcal{V}_0$ the Levi form $\mathcal{B}_{(0,\omega_0)}$ reads
\begin{equation}\label{ama}
\sum_{j=1}^{p^{\prime\prime}}|\zeta_j|^2-\sum_{j=p^{\prime\prime}+1}^{\nu}|\zeta_j|^2+
\sum_{l=1}^{p^\prime}|\tau_l|^2-\sum_{l=p^\prime+1}^{n-\nu}|\tau_l|^2.
\end{equation}
\end{proposition}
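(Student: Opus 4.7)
The plan is to reduce the proposition to the real-analytic normal form theorem of Treves \cite[Section I.9]{t2} and then use a finite chain of linear changes of coordinates to put the Levi form in the shape \eqref{ama}. I would first invoke \cite[Section I.9]{t2} to obtain real-analytic coordinates $x_j, y_j, s_k, t_l$ centered at $0$ and real-analytic real-valued functions $\phi_k$ satisfying \eqref{zero}, such that $z_j = x_j + \iota y_j$ and $w_k = s_k + \iota \phi_k$ are first integrals whose differentials locally span $T^\prime$. Solving $L z_j = 0 = L w_k$ yields the local basis of $\mathcal{V}$
\[
L_j = \frac{\partial}{\partial \overline{z}_j} - \iota \sum_k \frac{\partial \phi_k}{\partial \overline{z}_j}\frac{\partial}{\partial s_k},\qquad M_l = \frac{\partial}{\partial t_l} - \iota \sum_k \frac{\partial \phi_k}{\partial t_l}\frac{\partial}{\partial s_k},
\]
with $L_j|_0 = \partial/\partial \overline{z}_j|_0$ and $M_l|_0 = \partial/\partial t_l|_0$ by \eqref{zero}, which confirms the basis of $\mathcal{V}_0$ named in the statement. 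Since the $M_l$ are real at $0$, the span of the $M_l|_0$ coincides with $\mathcal{V}_0 \cap \overline{\mathcal{V}}_0$.

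Next I would compute the Levi form in this basis. Because the real part of $T^\prime|_0$ is spanned by $ds_k|_0$, one has $\omega_0 = \sum_k \omega_{0,k}\, ds_k|_0$ with $\omega_{0,k} \in \mathbb{R}$. Setting $\Phi = \sum_k \omega_{0,k}\phi_k$, the explicit form of $L_j, M_l$ and a direct bracket computation yield
\[
\mathcal{B}_{(0,\omega_0)}(L_j|_0, L_{j'}|_0) = \frac{\partial^2 \Phi}{\partial \overline{z}_j \partial z_{j'}}\Big|_0,\ \mathcal{B}(L_j|_0, M_l|_0) = \frac{\partial^2 \Phi}{\partial \overline{z}_j \partial t_l}\Big|_0,\ \mathcal{B}(M_l|_0, M_{l'}|_0) = \frac{\partial^2 \Phi}{\partial t_l \partial t_{l'}}\Big|_0.
\]
The non-degeneracy hypothesis is then exactly the invertibility of the real symmetric $t$-block.

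Finally I would diagonalize in three substeps, each preserving the special normal form: (i) a real linear change of $t$ puts the $t$-block in diagonal $\pm 1$ form; (ii) using the invertibility of the $t$-block, a substitution $\widetilde{z}_j = z_j - \sum_l c_{jl}\, t_l$ for suitable real $c_{jl}$ completes the square and kills all mixed derivatives $\partial_{\overline{z}_j}\partial_{t_l}\Phi|_0$; (iii) a $\mathbb{C}$-linear change of $z$ Hermitian-diagonalizes the residual $(\overline{z}_j, z_{j'})$-block with entries $\pm 1$. Composing these changes produces the expression \eqref{ama}.

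The main obstacle is the bookkeeping in step (ii): each coordinate change must remain within the class permitted by Treves' normal form, so that the $\phi_k$ stay real-analytic, real-valued, and vanish to first order at $0$, and so that $z_j$ and $w_k$ continue to be first integrals of $\mathcal{V}$. Showing that the completion of the square simultaneously absorbs the cross terms and adjusts the $\phi_k$ in a compatible way is the delicate point; after that is handled (for instance by an implicit function theorem argument at the origin), the remaining linear-algebraic diagonalizations are routine.
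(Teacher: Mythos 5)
The paper itself offers no proof of Proposition \ref{prr}: it is quoted from Treves \cite[Section I.9]{t2} (see also \cite[(XVIII.3.2)]{t2}), so your plan --- invoke Treves for the crude normal form $z_j=x_j+\iota y_j$, $w_k=s_k+\iota\phi_k$ with \eqref{zero}, compute the Levi form via (I.9.2) as the Hessian of $\Phi=\sum_k\sigma_k\phi_k$ at $0$, and then diagonalize by admissible linear changes --- is a reasonable reconstruction. Your steps (i) and (iii), and the identification of the non-degeneracy hypothesis with the invertibility of the real symmetric $t$-block, are correct.

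Step (ii), however, has a genuine gap, and it is exactly at the point you flagged as delicate. The substitution $\widetilde z_j=z_j-\sum_l c_{jl}t_l$ leaves the admissible class: $t_l$ is not a first integral (indeed $M_l t_{l'}=\delta_{ll'}$ for your basis), so $\widetilde z_j$ is no longer annihilated by $\mathcal V$ and the differentials of $\widetilde z_j,w_k$ no longer span $T'$; no adjustment of the $\phi_k$ (implicit function theorem or otherwise) can repair this, since the defect sits in the $z$'s themselves. Moreover the cross terms $\partial^2\Phi/\partial\overline z_j\partial t_l|_0$ are complex, so real coefficients $c_{jl}$ would not suffice even at the linear-algebra level. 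The correct move is the dual one: keep $z_j$ and $w_k$ fixed and change the free variables, $\widetilde t_l=t_l-\sum_j\bigl(a_{lj}x_j+b_{lj}y_j\bigr)$ with real $a_{lj},b_{lj}$. This stays inside the special class (the $\phi_k$, re-expressed, remain real-valued, real-analytic and satisfy \eqref{zero}), while at the origin it replaces $\partial/\partial\overline z_j|_0$ by $\partial/\partial\overline z_j|_0+\sum_l c_{jl}\,\partial/\partial t_l|_0$ with an arbitrary \emph{complex} coefficient $c_{jl}$ built from $a_{lj},b_{lj}$. Invertibility of the $t$-block then lets you choose $c_{jl}$ so that the new $\overline z$-directions are $\mathcal B$-orthogonal to $W=\mathcal V_0\cap\overline{\mathcal V}_0$; equivalently, you realize the splitting of $\mathcal V_0$ as the $\mathcal B$-orthogonal sum of $W$ and a complement, which is precisely where the non-degeneracy hypothesis enters. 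With (ii) replaced in this way, your steps (i) and (iii) go through and yield \eqref{ama}, in agreement with the construction of \cite[Section I.9]{t2}.
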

\begin{remark}\rm
In particular
\begin{equation}\label{ama2}
{\rm d}^\prime z_j=0,\ {\rm d}^\prime  w_k=0,\quad
j=1,\ldots,\nu;\ k=1,\ldots,d.
\end{equation}
In these coordinates we have $T_0^0={\rm span}_\R\{ds_k|_0;\ k=1,\ldots,d\}$, so that $\omega_0=\sum_{k=1}^d \sigma_k ds_k|_0$, with $\sigma_k\in\R$. By (I.9.2) of \cite{t2} we have $\mathcal{B}_{(0,\omega_0)}(\mathbf{v}_1,\mathbf{v}_2)=\sum_{k=1}^d \sigma_k(V_1\overline{V}_2\phi_k)|_0,$ with $V_1$ and $V_2$ smooth sections of $\mathcal{V}$ extending $\mathbf{v}_1$ and $\mathbf{v}_2$ respectively. Upon setting $\Phi=\sum_{k=1}^d\sigma_k\phi_k$ we can suppose, in addition, that
\begin{equation}\label{prima}
\Phi=\sum_{j=1}^{p^{\prime\prime}}|z_j|^2-\sum_{j=p^{\prime\prime}+1}^{\nu}|z_j|^2+
\frac{1}{2}\sum_{l=1}^{p^\prime}t_l^2-
\frac{1}{2}\sum_{l=p^\prime+1}^{n-\nu}t_l^2+O(|s|(|z|+|s|+|t|)+|z|^3+|t|^3);
\end{equation}
see \cite[Section I.9]{t2} and \cite[(XVIII.3.2)]{t2} for details.
\end{remark}
 We can now prove Theorem \ref{teor2}. We may assume, without loss of generality, that
$\sigma=(1,0,\ldots,0)$. Consequently, from \eqref{prima} (after
the change of variables $t\mapsto t/\sqrt{2}$) we have
\begin{equation}
\phi_1(x,y,s,t)=|z^\prime|^2-|\zpp|^2+|\tp|^2-|\tpp|^2+O(|s|(|z|+|s|+|t|)+|z|^3+|t|^3),\label{fi1}
\end{equation}
where we set
\[\begin{cases}
\zp=(z_{1},\ldots,z_{p^{\prime\prime}}),\\ \zpp=(z_{p^{\prime\prime}+1},\ldots,z_{\nu})\\  \tp=(t_1,\ldots,t_{p^\prime}),\\ \tpp=(t_{p^\prime+1},\ldots, t_{n-\nu}).
\end{cases}
\]
Moreover, we choose a function $\chi(x,y,s,t)$ in $G^s_0(\R^{2\nu+d+(n-\nu)})$,  $\chi=0$ away from a neighborhood $V\subset\subset \Omega$ of $0$ and $\chi=1$ in a neighborhood $U\subset\subset V$ of $0$, where $V$ and $U$ will be chosen later on.  We set, for $\rho>0, \lambda>0$,
\[
\begin{split}
f_{\rho,\lambda}&=e^{\rho h_{1,\lambda}} d\ozp\wedge d\tp\\
v_{\rho,\lambda}&=\rho^{(m+n)/2}\chi{} e^{\rho
h_{2,\lambda}} d\ozpp\wedge d\tpp\wedge dz\wedge dw,
\end{split}\]
where, with $\lambda>1$,
\begin{equation}\label{h11}
h_{1,\lambda}:=-\iota s_1+\phi_1-2|\zp|^2-2|\tp|^2-\lambda\sum_{k=1}^d(s_k+\iota \phi_k)^2,
\end{equation}
and
\begin{equation}\label{h22}
h_{2,\lambda}:=\iota s_1-\phi_1-2|\zpp|^2-2|\tpp|^2-\lambda\sum_{k=1}^d(s_k+\iota\phi_k)^2.
\end{equation}
Now, we have $\chi\in G_0^s(\overline{V},C/2)$, for some $C>0$. We then apply Proposition \ref{prop2.1} with
$f_{\rho,\lambda}$
 and $v_{\rho,\lambda}$ in place of $f$ and $v$ respectively, and $C_1=C_2=C$, $\epsilon=C/2$,
  taking $V$ small enough to be contained in the neighborhood $\Omega'$ which arises in the conclusion of  Proposition \ref{prop2.1}. Observe that, in fact, $f_{\rho,\lambda}\in G^s
(\Omega,C;\Lambda^{0,q})$ since this form is in fact real-analytic and $p^{\prime\prime}+p^\prime=q$
by hypothesis, whereas $v_{\rho,\lambda}\in G^s_0(\overline{V},C/2;\Lambda^{m,n-q})\subset
G^s_0(\overline{\Omega'},C/2;\Lambda^{m,n-q})$ (recall, $m={\rm dim} M-n$). We prove now that $f_\rho$ is a cocycle (i.e. ${\rm d}^\prime f_{\rho,\lambda}=0$), so that Proposition \ref{prop2.1} can in fact be applied. However, we will show that \eqref{apriori} fails for every choice of $C'$
     when $\rho\to+\infty$, if $\lambda$ is large enough, obtaining a contradiction.\\
 Now, by
\eqref{ama2}
\[
{\rm d}^\prime f_{\rho,\lambda}=0,
\]
and
\begin{equation}\label{ag1}
{\rm d}^\prime v_{\rho,\lambda}=\rho^{(m+n)/2}e^{\rho
h_{2,\lambda}}{\rm d}^\prime \chi{}\wedge d\ozpp\wedge d\tpp\wedge dz\wedge dw.
\end{equation}
 In order to
estimate the right hand side of \eqref{apriori} we observe that, by
\eqref{fi1} and \eqref{h11}
\[
{\rm Re}\,
h_{1,\lambda}=-|\zp|^2-|\zpp|^2-|t|^2-\lambda|s|^2+\mathcal{R}(z,s,t)+
O(|z|^3+|t|^3)+\lambda O(|z|^4+|t|^4),\] where
\begin{equation}\label{gy}
|\mathcal{R}(z,s,t)|=O(|s|(|z|+|s|+|t|))\leq
\tilde{C}\left(\frac{\epsilon}{2}(|z|+|s|+|t|)^2+\frac{1}{2\epsilon}|s|^2\right),
\end{equation}
for every $\epsilon>0$. Hence, if $\epsilon$ and then  $1/\lambda$
are small enough we see that, possibly after replacing $\Omega$ with a smaller neighborhood,  \begin{equation}\label{ag2}
\sup_{\Omega}\,{\rm Re}\,h_{1,\lambda}\leq 0.
\end{equation}
Similarly,
\begin{multline*}
{\rm Re}\,h_{2,\lambda}=-|\zp|^2-|\zpp|^2-|t|^2-\lambda|s|^2\\
+\mathcal{R}^\prime(z,s,t)+O(|z|^3+|t|^3)+\lambda O(|z|^4+|t|^4),
\end{multline*}
with $\mathcal{R}^\prime$ satisfying the same estimate \eqref{gy}.
Therefore if $\lambda$ is sufficiently large, in $\Omega$ we
have
\[
{\rm Re}\, h_{2,\lambda}\leq-\frac{1}{2} (|z|^2+|t|^2+\lambda|s|^2)+\tilde{C}_1 (|z|^3+|t|^3)+\tilde{C}_2\lambda(|z|^4+|t|^4).
\]
Hence, possibly for a smaller $V$, since $U\subset\subset V$ is a neighborhood of $0$, there exists a constant $c>0$ such that
\begin{equation}\label{ag3}
\sup_{\overline{V}\setminus U}h_{2,\lambda}(z,s,t)\leq -c.
\end{equation}
As a consequence of Proposition \ref{prorodino} and \eqref{ag2}, \eqref{ag3}, for every compact subset $K\subset \Omega$ it turns out
\begin{equation}\label{aa}
\|f_{\rho,\lambda}\|_{K,C}\leq C^\prime e^{\rho^{1/s'}},
\end{equation}
\begin{equation}\label{ab}
\|{\rm d}^\prime v_{\rho,\lambda}\|_{K,C}\leq
C^{\prime\prime}\rho^{(m+n)/2} e^{-c\rho+\rho^{1/s'}},
\end{equation}
for any $1<s'<s$, where the constants $C^\prime, C^{\prime\prime}$ are independent of $\rho$. It follows that
\begin{equation}\label{3.3}
\|f_{\rho,\lambda}\|_{K,C}\|{\rm d}^\prime v_{\rho,\lambda}\|_{K,C}\leq
C^{\prime}C^{\prime\prime}\rho^{(m+n)/2}
e^{-c\rho+2\rho^{1/s'}}\longrightarrow 0\ {\rm as}\
\rho\to+\infty,
\end{equation}
because $s'>1$.
On the other hand, for the right-hand side of \eqref{apriori} it is easily seen that
 \[ \int f_{\rho,\lambda}\wedge
v_{\rho,\lambda}\longrightarrow c'\not=0,
\]
as $\rho\to+\infty$ (see the end of the proof of \cite[Theorem XVIII.3.1]{t2}), which together with \eqref{3.3} contradicts \eqref{apriori}.\par
This completes the proof of Theorem \ref{teor2}.

\begin{remark}\rm
The above machinery can be applied to prove other necessary conditions for Gevrey local solvability in the spirit of analogous results valid in the framework of smooth functions and Schwartz distributions.\par As an example, consider the special
case of local solvability
when $q=n$, namely in top
degree. In this case, the
Cordaro-Hounie condition
$(\mathcal{P})_{n-1}$ (see \cite{cordaro2} and
\cite{cor01}) is known to
be necessary for the local
solvability in the smooth category, and it is
conjectured to be sufficient
as well. Consider the following analytic variant. Let $L_j$,
$j=1,...,n$, be real-analytic independent
vector fields which generates
$\mathcal{V}$ at any point of
$\Omega$.\par
{\it We say that the real-analytic condition
 $(\mathcal{P})_{n-1}$ is
satisfied at $x_0$ if there
exists an open neighbourhood
$U\subset\Omega$ of $x_0$
such that, given any open set
$V\subset U$ and given any real-analytic
$h\in C^\infty(V)$ satisfying
$L_j h=0$, $j=1,...,n$, then
${\rm Re}\, h$ does not assume
a local minimum\footnote{A
real-valued function $f$
defined on a topological
space $X$ is said to assume a
local minimum over a compact
set $K\subset X$ if there
exist $a\in\R$ and $K\subset
V\subset X$ open such that
$f=a$ on $K$ and $f>a$ on
$V\setminus K$.} over any
nonempty compact subset of
$V$.}\par
One could then prove that
{\it if the real-analytic condition
$(\mathcal{P})_{n-1}$ is not
satisfied at $x_0$, then  for
every $s>1$, ${\rm d}'$ is
not locally solvable in the
sense of ultradistributions
of order $s$, near $x_0$ and
in degree $n$.}
\end{remark}
For the sake of brevity we omit the proof, which goes on along the same lines as that in \cite[Theorem 1.2]{cor01}, using the local solvability estimates in Proposition \ref{prop2.1}, combined with Proposition \ref{prorodino}.

\end{document}